\newtheorem{thm}{Theorem}
\newtheorem{lem}{Lemma}
\newtheorem{prop}{Proposition}
\newtheorem{conj}{Conjecture}
\newcommand{\lrf}[1]{\left\lfloor #1\right\rfloor}
\newcommand{\lrc}[1]{\left\lceil #1\right\rceil}
\newcommand{\D}{d}
\renewcommand{\d}{\delta}
\title{On unimodality problems in Pascal's triangle
\thanks{Partially supported by the National Science Foundation of China under Grant No.10771027.}}
\author{Xun-Tuan Su and Yi Wang
\thanks{Corresponding author.}\\
\small Department of Applied Mathematics\\[-0.8ex]
\small Dalian University of Technology\\[-0.8ex]
\small Dalian 116024, P. R. China\\
\small \texttt{suxuntuan@yahoo.com.cn}\\[-0.8ex]
\small \texttt{wangyi@dlut.edu.cn}}
\date{\normalsize Submitted: Jan 23, 2008\quad Accepted: Aug 28, 2008\quad Published: Sep 8, 2008\\
Mathematics Subject Classification: 05A10, 05A20}
\begin{document}
\maketitle

\begin{abstract}
Many sequences of binomial coefficients share various unimodality
properties. In this paper we consider the unimodality problem of a
sequence of binomial coefficients located in a ray or a transversal
of the Pascal triangle. Our results give in particular an
affirmative answer to a conjecture of Belbachir {\it et al} which
asserts that such a sequence of binomial coefficients must be
unimodal. We also propose two more general conjectures.
\end{abstract}

\section{Introduction}
Let $a_0,a_1,a_2,\ldots$ be a sequence of nonnegative numbers. It is
called {\it unimodal} if $a_0\le a_1\le\cdots\le a_{m-1}\le a_m\ge
a_{m+1}\ge\cdots$ for some $m$ (such an integer $m$ is called a mode
of the sequence). In particular, a monotone (increasing or
decreasing) sequence is known as unimodal. The sequence is called
{\it concave} (resp. {\it convex}) if for $i\ge 1$,
$a_{i-1}+a_{i+1}\le 2a_i$ (resp. $a_{i-1}+a_{i+1}\ge 2a_i$). The
sequence is called {\it log-concave} (resp. {\it log-convex}) if for
all $i\ge 1$, $a_{i-1}a_{i+1}\le a_i^2$ (resp. $a_{i-1}a_{i+1}\ge
a_i^2$). By the arithmetic-geometric mean inequality, the concavity
implies the log-concavity (the log-convexity implies the convexity).
For a sequence $\{a_i\}$ of positive numbers, it is log-concave
(resp. log-convex) if and only if the sequence
$\left\{a_{i+1}/a_i\right\}$ is decreasing (resp. increasing), and
so the log-concavity implies the unimodality. The unimodality
problems, including concavity (convexity) and log-concavity
(log-convexity), arise naturally in many branches of mathematics.
For details, see
\cite{Bre89,Bre94,Sta89,Wjcta02,Weujc02,Wlaa03,WYeujc05,WYjcta07}
about the unimodality and log-concavity and \cite{DV08,LW07} about
the log-convexity.

Many sequences of binomial coefficients share various unimodality
properties. For example, the sequence
$\left\{\binom{n}{k}\right\}_{k=0}^{n}$ is unimodal and log-concave
in $k$. On the other hand, the sequence
$\left\{\binom{n}{k}\right\}_{n=k}^{+\infty}$ is increasing,
log-concave and convex in $n$ (see Comtet~\cite{Com74} for example).
As usual, let $\binom{n}{k}=0$ unless $0\le k\le n$. Tanny and
Zuker~\cite{TZ74,TZ76} showed the unimodality and log-concavity of
the binomial sequences $\left\{\binom{n_0-i}{i}\right\}_i$ and
$\left\{\binom{n_0-i\D}{i}\right\}_i$. Very recently, Belbachir {\it
et al}~\cite{BB07} showed the unimodality and log-concavity of the
binomial sequence $\left\{\binom{n_0+i}{i\D}\right\}_i$. They
further proposed the following.
\begin{conj}[{\cite[Conjecture 1]{BB07}}]\label{BB-C}
Let $\binom{n}{k}$ be a fixed element of the Pascal triangle crossed
by a ray. The sequence of binomial coefficients located along this
ray is unimodal.
\end{conj}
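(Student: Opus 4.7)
The plan is to parameterize the ray by its integer direction $(\alpha,\beta)\ne(0,0)$, so that the sequence of interest is $f(i)=\binom{n_0+\alpha i}{k_0+\beta i}$, $i\in\mathbb{Z}$ (with terms outside the triangle set to $0$), and to reduce to two clean cases via symmetries of Pascal's triangle. The identity $\binom{n}{k}=\binom{n}{n-k}$ sends direction $(\alpha,\beta)$ to $(\alpha,\alpha-\beta)$, and reversing the ray sends $(\alpha,\beta)$ to $(-\alpha,-\beta)$; both operations preserve unimodality of the associated sequence. Using them I may assume $\alpha\ge 0$ and $\beta\le\alpha/2$. When $\alpha=0$ the sequence is a subsequence of a single row, hence unimodal by the classical row unimodality of $\binom{n_0}{\cdot}$; so from here on $\alpha>0$.

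Next I split into two cases by the sign of $\beta$. If $0\le\beta\le\alpha/2$ (so $0\le\beta\le\alpha$), I claim $f$ is nondecreasing on $\mathbb{Z}$, which is trivially unimodal. This reduces to $\binom{n+\alpha}{k+\beta}\ge\binom{n}{k}$ for all integers $n,k$, and this follows by iterating the two one-line consequences of Pascal's rule, $\binom{n+1}{k}\ge\binom{n}{k}$ and $\binom{n+1}{k+1}\ge\binom{n}{k}$: apply the first $\alpha-\beta$ times and the second $\beta$ times.

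The substantive case is $\beta<0$; write $c=-\beta>0$. Here I aim to establish log-concavity $f(i-1)f(i+1)\le f(i)^2$ for every $i\in\mathbb{Z}$, which implies unimodality since the support is a finite interval. With $N=n_0+\alpha i$ and $K=k_0-ci$, this reads
\[
\binom{N-\alpha}{K+c}\binom{N+\alpha}{K-c}\le\binom{N}{K}^2,
\]
and is trivial when a binomial on the left falls out of range. The central step is the factorization
\[
\frac{\binom{N-\alpha}{K+c}\binom{N+\alpha}{K-c}}{\binom{N}{K}^2}
=\frac{\binom{N-\alpha}{K+c}\binom{N+\alpha}{K-c}}{\binom{N}{K+c}\binom{N}{K-c}}\cdot\frac{\binom{N}{K+c}\binom{N}{K-c}}{\binom{N}{K}^2}.
\]
The rightmost factor is $\le 1$ by classical log-concavity of a row of Pascal's triangle. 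The remaining factor telescopes to $\prod_{j=1}^{\alpha}\frac{(N-K-c-j+1)(N+j)}{(N-j+1)(N-K+c+j)}$, and a one-line expansion gives $(N-j+1)(N-K+c+j)-(N-K-c-j+1)(N+j)=(2j-1)K+(2N+1)c\ge 0$, so each factor is $\le 1$.

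The hard part is locating the factorization in the $\beta<0$ case: once one decomposes the step direction $(\alpha,-c)$ into its ``row'' piece $(\alpha,0)$ and ``column'' piece $(0,-c)$, the log-concavity inequality cleanly decouples into a classical row factor and a purely elementary termwise factor, but this splitting is not \emph{a priori} obvious. Everything else---the symmetry reduction, the monotonicity in the $\beta\ge 0$ case, and the bookkeeping at boundary values of $i$ where a binomial coefficient vanishes---is routine.
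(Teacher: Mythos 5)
Your proof is correct, and its overall architecture is the same as the paper's: reduce by the symmetry $\binom{n}{k}=\binom{n}{n-k}$ and by reversing the ray, then show the sequence is monotone for one range of directions and log-concave for the other, proving the log-concavity by factoring the ratio $\binom{N-\alpha}{K+c}\binom{N+\alpha}{K-c}/\binom{N}{K}^2$ against a classical one-line log-concavity of the triangle. Indeed, your inequality in the case $\beta<0$ is exactly the paper's Theorem~1(ii) inequality for $\D<\d$ rewritten via the symmetry (with $K=n-k$, $\alpha=\D$, $c=\d-\D$). Where you differ is in the details, and the differences are worth noting. In the key step you pivot on a \emph{row}: you factor out $\binom{N}{K+c}\binom{N}{K-c}\le\binom{N}{K}^2$ and bound the leftover factor by the telescoping product $\prod_{j=1}^{\alpha}\frac{(N-K-c-j+1)(N+j)}{(N-j+1)(N-K+c+j)}$ together with the identity $(N-j+1)(N-K+c+j)-(N-K-c-j+1)(N+j)=(2j-1)K+(2N+1)c\ge 0$ (which I checked; it is correct, and in the nontrivial case all the factors involved are positive, and $\binom{N}{K\pm c}>0$ so the division is legitimate). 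The paper instead pivots on a \emph{column}: it factors out $\binom{n+\D}{n-k}\binom{n-\D}{n-k}\le\binom{n}{n-k}^2$, which is its part (i), and bounds the two leftover factors $\binom{n-k}{\d-\D}/\binom{n-k+\d-\D}{\d-\D}$ and $\binom{k-\D}{\d-\D}/\binom{k+\d}{\d-\D}$ by monotonicity of binomial coefficients. These are dual factorizations of the same quantity: yours is self-contained and purely computational, while the paper's recycles its case (i) and avoids any expansion. Similarly, in the monotone case you iterate Pascal's rule where the paper invokes Vandermonde's convolution; yours is more elementary, while the paper's estimate $\binom{n+\D}{k+\d}\ge 2\binom{n}{k}$ also yields convexity, and its further analysis gives asymptotic log-convexity—information beyond what the conjecture requires. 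Your normalization $\alpha\ge 0$, $\beta\le\alpha/2$, which merges the paper's cases (i) and (iii) into a single monotonicity argument, is a clean organizational simplification and is valid.
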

\begin{center}
\setlength{\unitlength}{1.0mm}
\begin{picture}(180,60)(10,-1.5)

\put(90,58){$\binom{0}{0}$} \put(85,50){$\binom{1}{0}$}
\put(95,50){$\binom{1}{1}$} \put(80,42){$\binom{2}{0}$}
\put(90,42){$\binom{2}{1}$} \put(100,42){$\binom{2}{2}$}
\put(75,34){$\binom{3}{0}$} \put(85,34){$\binom{3}{1}$}
\put(95,34){$\binom{3}{2}$} \put(105,34){$\binom{3}{3}$}
\put(70,26){$\binom{4}{0}$} \put(80,26){$\binom{4}{1}$}
\put(90,26){$\binom{4}{2}$} \put(100,26){$\binom{4}{3}$}
\put(110,26){$\binom{4}{4}$} \put(65,18){$\binom{5}{0}$}
\put(75,18){$\binom{5}{1}$} \put(85,18){$\binom{5}{2}$}
\put(95,18){$\binom{5}{3}$} \put(105,18){$\binom{5}{4}$}
\put(115,18){$\binom{5}{5}$} \put(60,10){$\binom{6}{0}$}
\put(70,10){$\binom{6}{1}$} \put(80,10){$\binom{6}{2}$}
\put(90,10){$\binom{6}{3}$} \put(100,10){$\binom{6}{4}$}
\put(110,10){$\binom{6}{5}$} \put(120,10){$\binom{6}{6}$}
\put(55,2){$\binom{7}{0}$} \put(65,2){$\binom{7}{1}$}
\put(75,2){$\binom{7}{2}$} \put(85,2){$\binom{7}{3}$}
\put(95,2){$\binom{7}{4}$} \put(105,2){$\binom{7}{5}$}
\put(115,2){$\binom{7}{6}$} \put(125,2){$\binom{7}{7}$}

\put(99.5,-3.5){\line(-1,5){0.71}} \put(97,7.5){\line(-1,5){3.2}}
\put(92,31){\line(-1,5){3.3}}
\end{picture}

\bigskip Figure 1: a ray with $\D=3$ and $\d=2$.
\end{center}

The object of this paper is to study the unimodality problem of a
sequence of binomial coefficients located in a ray or a transversal
of the Pascal triangle. Let $\left\{\binom{n_i}{k_i}\right\}_{i\ge
0}$ be such a sequence. Then $\left\{n_i\right\}_{i\ge 0}$ and
$\left\{k_i\right\}_{i\ge 0}$ form two arithmetic sequences (see
Figure~1). Clearly, we may assume that the common difference of
$\left\{n_i\right\}_{i\ge 0}$ is nonnegative (by changing the order
of the sequence). For example, the sequence
$\left\{\binom{n_0-i}{i}\right\}_{i=0}^{\lfloor\frac{n_0}{2}\rfloor}$
coincides with the sequence
$\left\{\binom{n_0-\lfloor\frac{n_0}{2}\rfloor+i}{\lfloor\frac{n_0}{2}\rfloor-i}\right\}_{i=0}^{\lfloor\frac{n_0}{2}\rfloor}$
except for the order. On the other hand, the sequence
$\left\{\binom{n_i}{k_i}\right\}_{i\ge 0}$ is the same as the
sequence $\left\{\binom{n_i}{n_i-k_i}\right\}_{i\ge 0}$ by the
symmetry of the binomial coefficients. So we may assume, without
loss of generality, that the common difference of
$\left\{k_i\right\}_{i\ge 0}$ is nonnegative. Thus it suffices to
consider the unimodality of the sequence
$\{\binom{n_{0}+i\D}{k_0+i\d}\}_{i\ge 0}$ for nonnegative integers
$\D$ and $\d$. The following is the main result of this paper, which
in particular, gives an affirmative answer to Conjecture~\ref{BB-C}.
\begin{thm}\label{main-thm}
Let $n_{0},k_{0},\D,\d$ be four nonnegative integers and $n_{0}\geq
k_{0}$. Define the sequence
\begin{equation*}\label{ai}
    C_i=\binom{n_{0}+i\D}{k_0+i\d},\qquad i=0,1,2,\ldots.
\end{equation*}
Then
\begin{itemize}
\item[\rm (i)]
if $\D=\d>0$ or $\d=0$, the sequence is increasing, convex and
log-concave;
\item[\rm (ii)]
if $\D<\d$, the sequence is log-concave and therefore unimodal;
\item[\rm (iii)]
if $\D>\d>0$, the sequence is increasing, convex, and asymptotically
log-convex (i.e., there exists a nonnegative integer $m$ such that
$C_m,C_{m+1},C_{m+2},\ldots$ is log-convex).
\end{itemize}
\end{thm}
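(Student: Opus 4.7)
The plan is to work with the ratio $r_i := C_{i+1}/C_i$ throughout: the four properties asked for are equivalent to $r_i\geq 1$ (increasing), $r_i$ nonincreasing (log-concave), $r_i$ nondecreasing (log-convex), and $C_{i+1}-C_i$ nondecreasing (convex). Setting $N_i=n_0+i\D$, $K_i=k_0+i\d$, and $M_i=N_i-K_i$, the factorial expansion of $\binom{N+\D}{K+\d}/\binom{N}{K}$ gives
\begin{equation*}
r_i = \frac{\prod_{j=1}^{\D}(N_i+j)}{\prod_{j=1}^{\d}(K_i+j)}\cdot\frac{M_i!}{(M_i+\D-\d)!},
\end{equation*}
where the last factor places $\D-\d$ extra factors in the denominator (if $\D>\d$) or $\d-\D$ falling-factorial terms of $M_i$ in the numerator (if $\D<\d$). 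This explicit form is the common starting point for all three cases.

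For case (i), the symmetry $\binom{n}{k}=\binom{n}{n-k}$ reduces both subcases to $C_i=\binom{N_i}{d}$ for a fixed $d$ ($d=k_0$ if $\d=0$; $d=n_0-k_0$ if $\D=\d>0$). Then $M_i$ disappears and $r_i=\prod_{j=1}^{\D}(N_i+j)/(N_i+j-d)$; every factor exceeds $1$ (since $N_i\geq d$) and is strictly decreasing in $i$, giving both increasing and log-concavity. Convexity follows by iterating Pascal's identity $\binom{N+1}{d}-\binom{N}{d}=\binom{N}{d-1}$ to telescope $C_{i+1}-C_i=\sum_{j=0}^{\D-1}\binom{N_i+j}{d-1}$, a sum whose every summand is itself nondecreasing in $i$.

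For case (ii), $\D<\d$, the goal is $C_iC_{i+2}\leq C_{i+1}^{2}$. The explicit formula yields
\begin{equation*}
\frac{C_iC_{i+2}}{C_{i+1}^{2}} = \prod_{j=1}^{\D}\frac{N_i+\D+j}{N_i+j}\cdot\prod_{j=1}^{\d}\frac{K_i+j}{K_i+\d+j}\cdot\prod_{j=0}^{\d-\D-1}\frac{M_i-(\d-\D)-j}{M_i-j},
\end{equation*}
and I will verify the bound $\leq 1$ by taking logarithms and comparing the three groups of factors termwise. The conceptual guide is the pointwise identity
\begin{equation*}
\frac{\D^{2}}{N}-\frac{\d^{2}}{K}-\frac{(\d-\D)^{2}}{M} = -\frac{(K\D-N\d)^{2}}{NKM}\leq 0\qquad(N=K+M),
\end{equation*}
the non-positivity of the Hessian of $\log\binom{N}{K}$ in direction $(\D,\d)$ — which is strict in case (ii) because $K\D<K\d\leq N\d$. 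Turning this continuous statement into the discrete multiplicative inequality, with its three products of different lengths $\D$, $\d$, and $\d-\D$, is the main bookkeeping step and where I expect the principal obstacle to lie.

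For case (iii), $\D>\d>0$, pairing the $\D$ numerator factors of $r_i$ against the $\d+(\D-\d)=\D$ denominator factors shows $r_i\geq 1$, so $C_i$ is increasing. Convexity follows by Pascal telescoping along a lattice path from $(N_i,K_i)$ to $(N_{i+1},K_{i+1})$: this writes $C_{i+1}-C_i$ as a sum of binomials each of which is itself an ``increasing'' sequence along a parallel ray of direction $(\D,\d)$ based at a shifted starting point, hence each term is nondecreasing in $i$. The delicate claim is asymptotic log-convexity, $r_{i+1}\geq r_i$ for $i\geq m$. I plan to approximate the second forward difference $\log C_{i+2}-2\log C_{i+1}+\log C_i$ by the continuous second derivative $\D^{2}\psi'(N_i+1)-\d^{2}\psi'(K_i+1)-(\D-\d)^{2}\psi'(M_i+1)$ and apply $\psi'(x+1)=1/x-1/(2x^{2})+O(1/x^{3})$: the first-order contribution $-(K_i\D-N_i\d)^{2}/(N_iK_iM_i)$ is only $O(i^{-3})$ because $K_i\D-N_i\d=k_0\D-n_0\d$ is constant in $i$, while the second-order Stirling correction is of order $+1/(2i^{2})>0$ and dominates. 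Making this asymptotic positivity into an effective threshold $m$ is the principal technical step of the argument.
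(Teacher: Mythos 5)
Your strategy is workable and genuinely different from the paper's, but as the proposal stands the decisive step of case (ii) is missing: the inequality $C_iC_{i+2}\le C_{i+1}^2$ is exactly what has to be proved, and you only announce that you ``will verify'' it by a termwise comparison, offering the Hessian identity as a heuristic and explicitly flagging the discrete conversion as the expected obstacle. For the record, the comparison you propose does go through, and more painlessly than you fear: pair each factor of the first product with the factor of the same index $j=1,\dots,\D$ of the second product; the inequality $\frac{N_i+\D+j}{N_i+j}\le\frac{K_i+\d+j}{K_i+j}$ reduces after cross-multiplication to $\D(K_i+j)\le\d(N_i+j)$, which holds because $\D<\d$ and $K_i\le N_i$; the unpaired factors $\frac{K_i+j}{K_i+\d+j}$ for $\D<j\le\d$ and the factors $\frac{M_i-(\d-\D)-j}{M_i-j}$ are each at most $1$; and the indices where some term vanishes are trivial, since $M_i$ is strictly decreasing, so $C_{i+1}=0$ forces $C_{i+2}=0$ and the log-concavity inequality holds with $0$ on the left. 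With that one computation supplied, your case (ii) becomes a legitimate alternative to the paper's argument, which instead factors $\binom{n+\D}{k+\d}\binom{n-\D}{k-\d}$ as $\binom{n+\D}{n-k}\binom{n-\D}{n-k}$ times two ratios of binomial coefficients that are visibly at most $1$, and then quotes part (i).

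Case (i) is fine, and in case (iii) your pairing argument for monotonicity and the Pascal telescoping for convexity work (the paper gets both at once from Vandermonde's convolution, $\binom{n+\D}{k+\d}\ge\binom{\D}{\d}\binom{n}{k}\ge 2\binom{n}{k}$). Your plan for asymptotic log-convexity is essentially right, and it is really the paper's computation in analytic dress: the exact second difference $\log C_{i+1}-2\log C_i+\log C_{i-1}$ equals $\sum_{j=1}^{\D}\log\frac{N_i+j}{N_i-\D+j}-\sum_{j=1}^{\d}\log\frac{K_i+j}{K_i-\d+j}-\sum_{j=1}^{\D-\d}\log\frac{M_i+j}{M_i-(\D-\d)+j}$, and expanding each logarithm to second order yields precisely your two terms: the first-order part $\frac{\D^2}{N_i}-\frac{\d^2}{K_i}-\frac{(\D-\d)^2}{M_i}=-\frac{(k_0\D-n_0\d)^2}{N_iK_iM_i}=O(i^{-3})$ (your identity, and the observation that $K_i\D-N_i\d$ is constant, are both correct), plus a positive $\frac{1}{2i^2}+O(i^{-3})$; this mirrors the paper's Vieta computation that $a_{2\D-1}=0$ and $a_{2\D-2}=\frac12$ for its polynomial $P(i)$. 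Two cautions if you phrase it through $\psi'$: you must control the error in replacing the symmetric second difference by $\D^2\psi'(N_i+1)-\d^2\psi'(K_i+1)-(\D-\d)^2\psi'(M_i+1)$ and note that it is $O(i^{-3})$ (a crude $O(i^{-2})$ bound would drown the $\frac{1}{2i^2}$ term you rely on); and no effective threshold is required---the theorem asserts only the existence of $m$, so eventual positivity already finishes the proof.
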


This paper is organized as follows. In the next section, we prove
Theorem~\ref{main-thm}. In Section~3, we present a combinatorial
proof of the log-concavity in Theorem~\ref{main-thm} (ii). In
Section~4, we show more precise results about the asymptotically
log-convexity for certain particular sequences of binomial
coefficients in Theorem~\ref{main-thm} (iii). Finally in Section~5,
we propose some open problems and conjectures.

Throughout this paper we will denote by $\lrf{x}$ and $\lrc{x}$ the
largest integer $\le x$ and the smallest integer $\ge x$
respectively.
\section{The proof of Theorem~\ref{main-thm}}
The following result is folklore and we include a proof of it for
completeness.
\begin{lem}\label{lem-subsequence}
If a sequence $\{a_i\}_{i\ge 0}$ of positive numbers is unimodal
(resp. increasing, decreasing, concave, convex, log-concave,
log-convex), then so is its subsequence $\{a_{n_0+i\D}\}_{i\ge 0}$
for arbitrary fixed nonnegative integers $n_0$ and $\D$.
\end{lem}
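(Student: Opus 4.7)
The plan is to treat all seven properties uniformly via a short \emph{telescoping} observation. Write $b_i=a_{n_0+i\D}$. If $\D=0$ then $\{b_i\}$ is constant and every listed property holds trivially, so I would assume $\D\ge 1$ from now on.

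The monotone cases are immediate since $n_0+(i+1)\D>n_0+i\D$. For the remaining four shape properties I would use the standard reformulation: a positive sequence $\{a_j\}$ is concave (convex) iff the first differences $a_{j+1}-a_j$ are non-increasing (non-decreasing), and log-concave (log-convex) iff the consecutive ratios $a_{j+1}/a_j$ are non-increasing (non-decreasing). Then the identities
$$
b_{i+1}-b_i=\sum_{j=0}^{\D-1}\bigl(a_{n_0+i\D+j+1}-a_{n_0+i\D+j}\bigr),\qquad \frac{b_{i+1}}{b_i}=\prod_{j=0}^{\D-1}\frac{a_{n_0+i\D+j+1}}{a_{n_0+i\D+j}}
$$
show that every summand, respectively every factor, is monotone in $i$ in the direction inherited from $\{a_j\}$, so the sum and the product inherit that monotonicity. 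This settles the concave, convex, log-concave and log-convex cases at once.

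For unimodality, let $m$ be a mode of $\{a_j\}$. If every index $n_0+i\D$ satisfies $n_0+i\D\le m$ then $\{b_i\}$ is non-decreasing, and if every $n_0+i\D\ge m$ it is non-increasing. Otherwise there is a unique $I\ge 0$ with $n_0+I\D\le m<n_0+(I+1)\D$. For $i<I$ both $n_0+i\D$ and $n_0+(i+1)\D$ lie in $[0,m]$, so $b_i\le b_{i+1}$; for $i>I$ both lie in $[m,\infty)$, so $b_i\ge b_{i+1}$. The comparison between $b_I$ and $b_{I+1}$ at the single straddling index can go either way, but in either case declaring the larger of $b_I,b_{I+1}$ to be the mode of $\{b_i\}$ makes the subsequence unimodal.

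There is no serious obstacle; the only mildly delicate point is the straddling index in the unimodality argument, which is dispatched as above.
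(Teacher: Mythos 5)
Your proof is correct, and it is close in spirit to the paper's but organized differently and more completely. The paper proves only the log-concavity case, by noting that consecutive ratios are monotone and then chaining cross-products, $a_{n-\D}a_{n+\D}\le a_{n-\D+1}a_{n+\D-1}\le\cdots\le a_n^2$, and dismisses the remaining six properties as ``similar.'' Your telescoping identities for $b_{i+1}-b_i$ and $b_{i+1}/b_i$ carry the same content for the concave/convex and log-concave/log-convex cases (a sum, resp.\ product, of termwise monotone positive quantities), so there you are essentially reproducing the paper's mechanism in a uniform format. The genuine added value is the unimodality case: it is not in fact ``similar'' to the chain argument, since it needs the straddling-index analysis around the mode $m$ (locating the unique $I$ with $n_0+I\D\le m<n_0+(I+1)\D$ and taking the larger of $b_I,b_{I+1}$ as the new mode), and you supply exactly that, whereas the paper leaves it implicit. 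So your write-up is a correct and slightly more complete rendering of the lemma; nothing in it fails, and the only delicate point (the straddling index) is handled properly.
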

\begin{proof}
We only consider the log-concavity case since the others are
similar. Let $\{a_i\}_{i\ge 0}$ be a log-concave sequence of
positive numbers. Then the sequence
$\left\{a_{i-1}/a_i\right\}_{i\ge 0}$ is increasing. Hence
$a_{j-1}/a_j\le a_k/a_{k+1}$ for $1\le j\le k$, i.e.,
$a_{j-1}a_{k+1}\le a_ja_k$. Thus
$$a_{n-d}a_{n+d}\le a_{n-d+1}a_{n+d-1}\le
a_{n-d+2}a_{n+d-2}\le\cdots\le a_{n-1}a_{n+1}\le a_n^2,$$ which
implies that the sequence $\{a_{n_0+i\D}\}_{i\ge 0}$ is log-concave.
\end{proof}

\begin{proof}[The proof of Theorem~\ref{main-thm}]
(i)\quad If $\d=0$, then $C_i=\binom{n_0+i\D}{k_0}$. The sequence
$\binom{i}{k_0}$ is increasing, convex and log-concave in $i$, so is
the sequence $C_i$ by Lemma~\ref{lem-subsequence}. The case $\D=\d$
is similar since $C_i=\binom{n_0+i\D}{n_0-k_0}$.

(ii)\quad To show the log-concavity of $\{C_i\}$ when $\D<\d$, it
suffices to show that
$$\binom{n+\D}{k+\d}\binom{n-\D}{k-\d}\le
{\binom{n}{k}}^2$$ for $n\ge k$. Write
\begin{eqnarray*}
  \binom{n+\D}{k+\d}\binom{n-\D}{k-\d}
   &=& \frac{(n+\D)!(n-\D)!}{(n-k+\D-\d)!(k+\d)!(n-k+\d-\D)!(k-\d)!} \\
   &=& \binom{n+\D}{n-k}\binom{n-\D}{n-k}
       \frac{\binom{n-k}{\d-\D}}{\binom{n-k+\d-\D}{\d-\D}}
       \frac{\binom{k-\D}{\d-\D}}{\binom{k+\d}{\d-\D}}.
\end{eqnarray*}
Now
$\binom{n-k}{\d-\D}\le\binom{n-k+\d-\D}{\d-\D},\binom{k-\D}{\d-\D}\le\binom{k+\d}{\d-\D}$
 and $\binom{n+\D}{n-k}\binom{n-\D}{n-k}\le{\binom{n}{n-k}}^2$ by (i).
Hence
$$\binom{n+\D}{k+\d}\binom{n-\D}{k-\d}\le{\binom{n}{n-k}}^2={\binom{n}{k}}^2,$$
as required.

(iii)\quad Assume that $\D>\d>0$. By Vandermonde's convolution
formula, we have
$$\binom{n+\D}{k+\d}=\sum_{r+s=k+\d}\binom{n}{r}\binom{\D}{s}\ge\binom{n}{k}\binom{\D}{\d}\ge2\binom{n}{k},$$
which implies that $\binom{n+\D}{k+\d}>\binom{n}{k}$ and
$\binom{n+\D}{k+\d}+\binom{n-\D}{k-\d}\ge 2\binom{n}{k}$. Hence the
sequence $\{C_i\}$ is increasing and convex.

It remains to show that the sequence $\{C_i\}$ is asymptotically
log-convex. Denote
$$\Delta(i):=\binom{n_0+(i+1)\D}{k_0+(i+1)\d}
\binom{n_0+(i-1)\D}{k_0+(i-1)\d}-{\binom{n_0+i\D}{k_0+i\d}}^2.$$
Then we need to show that $\Delta(i)$ is positive for all
sufficiently large $i$. Write
\begin{eqnarray*}
\Delta(i) &=& \frac{(n_0+i\D)![n_0+(i-1)\D]!}
       {(k_0+i\d)![k_0+(i+1)\d]!
          [n_0-k_0+i(\D-\d)]![n_0-k_0+(i+1)(\D-\d)]!}\times\\
&&\times
 \left\{\prod_{j=1}^{\D}(n_0+i\D+j)
  \prod_{j=1}^{\D-\d}[n_0-k_0+(i-1)(\D-\d)+j]
   \prod_{j=1}^{\d}[k_0+(i-1)\d+j]\right.\\
&&\left.-\prod_{j=1}^{\D}[n_0+(i-1)\D+j]
        \prod_{j=1}^{\D-\d}[n_0-k_0+i(\D-\d)+j]
        \prod_{j=1}^{\d}(k_0+i\d+j)\right\}\\
&=& \frac{(n_0+i\D)![n_0+(i-1)\D]!\D^{\D} \d^{\d}(\D-\d)^{(\D-\d)}}
       {(k_0+i\d)![k_0+(i+1)\d]!
          [n_0-k_0+i(\D-\d)]![n_0-k_0+(i+1)(\D-\d)]!}P(i),
\end{eqnarray*}
where
\begin{eqnarray*}
P(i) &=&
   \prod_{j=1}^{\D}\left(i+\frac{n_0+j}{\D}\right)
   \prod_{j=1}^{\D-\d}\left(i+\frac{n_0-k_0-\D+\d+j}{\D-\d}\right)
 \prod_{j=1}^{\d}\left(i+\frac{k_0-\d+j}{\d}\right)\\
&& -\prod_{j=1}^{\D}\left(i+\frac{n_0-\D+j}{\D}\right)
   \prod_{j=1}^{\D-\d}\left(i+\frac{n_0-k_0+j}{\D-\d}\right)
   \prod_{j=1}^{\d}\left(i+\frac{k_0+j}{\d}\right).
\end{eqnarray*}
Then it suffices to show that $P(i)$ is positive for sufficiently
large $i$. Clearly, $P(i)$ can be viewed as a polynomial in $i$. So
it suffices to show that the leading coefficient of $P(i)$ is
positive.

Note that $P(i)$ is the difference of two monic polynomials of
degree $2\D$. Hence its degree is less than $2\D$. Denote
$$P(i)=a_{2\D-1}i^{2\D-1}+a_{2\D-2}i^{2\D-2}+\cdots.$$
By Vieta's formula, we have
\begin{eqnarray*}
a_{2\D-1} &=&
-\left(\sum_{j=1}^{\D}\frac{n_0+j}{\D}+\sum_{j=1}^{\D-\d}\frac{n_0-k_0-\D+\d+j}{\D-\d}+\sum_{j=1}^{\d}\frac{k_0-\d+j}{\d}\right)\\
& & +\left(\sum_{j=1}^{\D}\frac{n_0-\D+j}{\D}+\sum_{j=1}^{\D-\d}\frac{n_0-k_0+j}{\D-\d}+\sum_{j=1}^{\d}\frac{k_0+j}{\d}\right)\\
&=& \sum_{j=1}^{\D}\left(\frac{n_0-\D+j}{\D}-\frac{n_0+j}{\D}\right)\\
&& +\sum_{j=1}^{\D-\d}
{\left(\frac{n_0-k_0+j}{\D-\d}-\frac{n_0-k_0-\D+\d+j}{\D-\d}
\right)}\\
&& +\sum_{j=1}^{\d}{\left(\frac{k_0+j}{\d}-\frac{k_0-\d+j}{\d}
\right)}\\
&=& \sum_{j=1}^{\D}(-1)+\sum_{j=1}^{\D-\d}1+\sum_{j=1}^{\d}1\\
&=& -\D+(\D-\d)+\d\\
&=& 0.
\end{eqnarray*}

Using the identity
$$\sum_{1\le i<j\le n} x_ix_j=\frac{1}{2}\left[\left(\sum_{i=1}^{n}
x_i\right)^2-\sum_{i=1}^{n} x_i^2\right],$$ we obtain again by
Vieta's formula
\begin{eqnarray*}
a_{2\D-2} &=&
\frac{1}{2}\left[\left(\sum_{j=1}^{\D}\frac{n_0+j}{\D}+\sum_{j=1}^{\D-\d}\frac{n_0-k_0-\D+\d+j}{\D-\d}+\sum_{j=1}^{\d}\frac{k_0-\d+j}{\d}\right)^2\right.\\
&&\left.-\left(\sum_{j=1}^{\D}\frac{n_0+j}{\D}\right)^2-\left(\sum_{j=1}^{\D-\d}\frac{n_0-k_0-\D+\d+j}{\D-\d}\right)^2-\left(\sum_{j=1}^{\d}\frac{k_0-\d+j}{\d}\right)^2\right]\\
&&-\frac{1}{2}\left[\left(\sum_{j=1}^{\D}\frac{n_0-\D+j}{\D}+\sum_{j=1}^{\D-\d}\frac{n_0-k_0+j}{\D-\d}+\sum_{j=1}^{\d}\frac{k_0+j}{\d}\right)^2\right.\\
&&\left.-\left(\sum_{j=1}^{\D}\frac{n_0-\D+j}{\D}\right)^2-\left(\sum_{j=1}^{\D-\d}\frac{n_0-k_0+j}{\D-\d}\right)^2-\left(\sum_{j=1}^{\d}\frac{k_0+j}{\D-\d}\right)^2\right].
\end{eqnarray*}
But $a_{2\D-1}=0$ implies
\begin{eqnarray*}
&&\left(\sum_{j=1}^{\D}\frac{n_0+j}{\D}+\sum_{j=1}^{\D-\d}\frac{n_0-k_0-\D+\d+j}{\D-\d}+\sum_{j=1}^{\d}\frac{k_0-\d+j}{\d}\right)^2\\
&=&\left(\sum_{j=1}^{\D}\frac{n_0-\D+j}{\D}+\sum_{j=1}^{\D-\d}\frac{n_0-k_0+j}{\D-\d}+\sum_{j=1}^{\d}\frac{k_0+j}{\d}\right)^2,
\end{eqnarray*} so we have
\begin{eqnarray*}
a_{2\D-2} &=&
\frac{1}{2}\sum_{j=1}^{\D}\left[\left(\frac{n_0-\D+j}{\D}
\right)^2-\left(\frac{n_0+j}{\D}\right)^2\right]\\
&& +\frac{1}{2}\sum_{j=1}^{\D-\d}
\left[\left(\frac{n_0-k_0+j}{\D-\d}
\right)^2-\left(\frac{n_0-k_0-\D+\d+j}{\D-\d}\right)^2\right]\\
&& +\frac{1}{2}\sum_{j=1}^{\d}\left[\left(\frac{k_0+j}{\d}
\right)^2-\left(\frac{k_0-\d+j}{\d}\right)^2\right]\\
&=& -\frac{1}{2}\sum_{j=1}^{\D}\frac{2n_0-\D+2j}{\D}
+\frac{1}{2}\sum_{j=1}^{\D-\d}\frac{2(n_0-k_0)-(\D-\d)+2j}{\D-\d}
+\frac{1}{2}\sum_{j=1}^{\d}\frac{2k_0-\d+2j}{\d}\\
&=& -\frac{1}{2}(2n_0+1)+\frac{1}{2}(2n_0-2k_0+1)+\frac{1}{2}(2k_0+1)\\
&=& {\frac{1}{2}}.
\end{eqnarray*}
Thus $P(i)$ is a polynomial of degree $2\D-2$ with positive leading
coefficient, as desired.  This completes the proof of the theorem.
\end{proof}
\section{Combinatorial proof of the log-concavity}
In Section~2 we have investigated the unimodality of sequences of
binomial coefficients by an algebraic approach. It is natural to ask
for a combinatorial interpretation. Lattice path techniques have
been shown to be useful in solving the unimodality problem. As an
example, we present a combinatorial proof of Theorem~\ref{main-thm}
(ii) following B\'ona and Sagan's technique in \cite{BS03}.

Let $\mathbb{Z}^2=\{(x,y):x,y\in\mathbb{Z}\}$ denote the
two-dimensional integer lattice. A lattice path is a sequence
$P_1,P_2,\ldots,P_{\ell}$ of lattice points on $\mathbb{Z}^2$. A
southeastern lattice path is a lattice path in which each step goes
one unit to the south or to the east. Denote by $P(n,k)$ the set of
southeastern lattice paths from the point $(0,n-k)$ to the point
$(k,0)$. Clearly, the number of such paths is the binomial
coefficient $\binom{n}{k}$.

Recall that, to show the log-concavity of
$C_i=\binom{n_0+i\D}{k_0+i\d}$ where $n_0\ge k_0$ and $\D<\d$, it
suffices to show $\binom{n+\D}{k+\d}\binom{n-\D}{k-\d}\le
{\binom{n}{k}}^2$ for $n\ge k$. Here we do this by constructing an
injection
$$\phi:P(n+\D,k+\d)\times
P(n-\D,k-\d)\longrightarrow P(n,k)\times P(n,k).$$

Consider a path pair $(p,q)\in P(n+\D,k+\d)\times P(n-\D,k-\d)$.
Then $p$ and $q$ must intersect. Let $I_1$ be the first
intersection. For two points $P(a,b)$ and $Q(a,c)$ with the same
$x$-coordinate, define their vertical distance to be $d_v(P,Q)=b-c$.
Then the vertical distance from a point of $p$ to a point of $q$
starts at $2(\d-\D)$ for their initial points and ends at $0$ for
their intersection $I_1$. Thus there must be a pair of points $P\in
p$ and $Q\in q$ before $I_1$ with $d_v(P,Q)=\d-\D$. Let $(P_1,Q_1)$
be the first such pair of points. Similarly, after the last
intersection $I_2$ there must be a last pair of points $P_2\in p$
and $Q_2\in q$ with the horizontal distance $d_h(P_2,Q_2)=-\d$ (the
definition of $d_h$ is analogous to that of $d_v$). Now $p$ is
divided by two points $P_1,P_2$ into three subpaths $p_1,p_2,p_3$
and $q$ is divided by $Q_1,Q_2$ into three subpaths $q_1,q_2,q_3$.
Let $p'_1$ be obtained by moving $p_1$ down to $Q_1$ south $\d-\D$
units and $p'_3$ be obtained by moving $p_3$ right to $Q_2$ east
$\d$ units. Then we obtain a southeastern lattice path $p'_1q_2p'_3$
in $P(n,k)$. We can similarly obtain the second southeastern lattice
path $q'_1p_2q'_3$ in $P(n,k)$, where $q'_1$ is $q_1$ moved north
$\d-\D$ units and $q'_3$ is $q_3$ moved west $\d$ units. Define
$\phi(p,q)=(p'_1q_2p'_3,q'_1p_2q'_3)$. It is not difficult to verify
that $\phi$ is the required injective. We omit the proof for
brevity.
\begin{center}
\begin{picture}(400,110)
\setlength{\unitlength}{0.70mm} \put(0,0){\begin{picture}(50,50)
 \multiput(0,0)(10,0){9}{\circle*{1.1}}
 \multiput(0,10)(10,0){9}{\circle*{1.1}}
 \multiput(0,20)(10,0){9}{\circle*{1.1}}
 \multiput(0,30)(10,0){9}{\circle*{1.1}}
 \multiput(0,40)(10,0){9}{\circle*{1.1}}
 \multiput(0,50)(10,0){9}{\circle*{1.1}}

 \linethickness{1.2pt} \put(0,50){\line(1,0){20}}
 \put(20,30){\line(1,0){10}} \put(30,10){\line(1,0){10}}
 \put(20,30){\line(0,1){20}} \put(30,10){\line(0,1){20}}
 \put(40,0){\line(0,1){10}} \thinlines

 \linethickness{1.2pt}
 \multiput(10,20)(0,2){5}{\line(0,1){0.8}}
 \multiput(50,10)(0,2){5}{\line(0,1){0.8}}
 \multiput(80,0)(0,2){5}{\line(0,1){0.8}}
 \thinlines

 \linethickness{1.2pt}
 \multiput(50,10)(2,0){15}{\line(1,0){0.8}}
 \multiput(10,20)(2,0){20}{\line(1,0){0.8}}
 \multiput(0,30)(2,0){5}{\line(1,0){0.8}}
 \thinlines

 \put(30,20){\circle*{2.0}} \put(20,30){\circle*{2.0}}
 \put(20,20){\circle*{2.0}} \put(40,10){\circle*{2.0}}
 \put(60,10){\circle*{2.0}}

\footnotesize{
 \put(21,44){$p_1$} \put(11,24){$q_1$}
 \put(45.5,22){$q_2$}  \put(74,12){$q_3$}  \put(31,24){$p_2$}
 \put(41,4){$p_3$}

 \put(15,26){$P_1$} \put(14.5,15.5){$Q_1$} \put(25,15){$I_1$}
 \put(34.5,5.5){$P_2$} \put(54,5.5){$Q_2$}}
 \end{picture}}

\put(16,0){\begin{picture}(50,50)
 \multiput(104,0)(10,0){9}{\circle*{1.1}}
 \multiput(104,10)(10,0){9}{\circle*{1.1}}
 \multiput(104,20)(10,0){9}{\circle*{1.1}}
 \multiput(104,30)(10,0){9}{\circle*{1.1}}
 \multiput(104,40)(10,0){9}{\circle*{1.1}}
 \multiput(104,50)(10,0){9}{\circle*{1.1}}

 \linethickness{1.2pt} \put(104,40){\line(1,0){20}}
 \put(124,20){\line(0,1){20}} \put(124,20){\line(1,0){30}}
 \put(154,10){\line(0,1){10}} \put(154,10){\line(1,0){10}}
 \put(164,0){\line(0,1){10}}
 \thinlines

 \linethickness{1.2pt}
 \multiput(114,30)(0,2){5}{\line(0,1){0.8}}
 \multiput(134,10)(0,2){10}{\line(0,1){0.8}}
 \multiput(154,10)(0,2){5}{\line(0,1){0.8}}
 \multiput(163.5,0)(0,2){5}{\line(0,1){0.8}}
 \multiput(114,30)(2,0){10}{\line(1,0){0.8}}
 \multiput(104,39.5)(2,0){5}{\line(1,0){0.8}}
 \multiput(134,9.5)(2,0){15}{\line(1,0){0.8}}
 \thinlines

 \put(134,20){\circle*{2.0}} \put(124,30){\circle*{2.0}}
 \put(124,20){\circle*{2.0}} \put(144,10){\circle*{2.0}}
 \put(164,10){\circle*{2.0}}
 \footnotesize{
 \put(135,24){$p_2$} \put(149.5,22){$q_2$} \put(125,35){$p'_1$}
 \put(165,4){$p'_3$} \put(115,35){$q'_1$} \put(148,11.5){$q'_3$}

 \put(118,25){$P_1$}
 \put(118,15.5){$Q_1$} \put(129,15){$I_1$} \put(138.5,5.5){$P_2$}
 \put(157.5,5.5){$Q_2$}}

 \end{picture}}
\end{picture}

\bigskip
Figure 2: the constructing of $\phi$.
\end{center}
\section{Asymptotic behavior of the log-convexity}
Theorem~\ref{main-thm} (iii) tells us that the sequence
$C_i=\binom{n_0+i\D}{k_0+i\d}$ is asymptotically log-convex when
$\D>\d>0$. We can say more for a certain particular sequence of
binomial coefficients. For example, it is easy to verify that the
central binomial coefficients $\binom{2i}{i}$ is log-convex for
$i\ge 0$ (see Liu and Wang~\cite{LW07} for a proof). In this section
we give two generalizations of this result. The first one is that
every sequence of binomial coefficients located along a ray with
origin $\binom{0}{0}$ is log-convex.
\begin{prop}\label{ray-0}
Let $\D$ and $\d$ be two positive integers and $\D>\d>0$. Then the
sequence $\left\{\binom{i\D}{i\d}\right\}_{i\ge 0}$ is log-convex.
\end{prop}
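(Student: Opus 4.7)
The plan is to prove log-convexity by establishing $P(i) \ge 0$ for all $i \ge 1$, where $P(i)$ is the polynomial from the proof of Theorem~\ref{main-thm}(iii), specialized to $n_{0}=k_{0}=0$. There,
\[
\Delta(i) \,:=\, \binom{(i+1)\D}{(i+1)\d}\binom{(i-1)\D}{(i-1)\d}-\binom{i\D}{i\d}^{2} \,=\, (\text{positive factor})\cdot P(i),
\]
and only $P(i) > 0$ for large $i$ was established; our task here is to upgrade this to all $i \ge 1$ in the present special case.

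First I would specialize $P(i)$ and extract common factors. Setting $U(m) = \prod_{j=1}^{\D}(m+j/\D)$, $V(m) = \prod_{j=1}^{\D-\d}(m+j/(\D-\d))$, $W(m) = \prod_{j=1}^{\d}(m+j/\d)$, the specialized formula becomes $P(i) = U(i)V(i-1)W(i-1) - U(i-1)V(i)W(i)$. Observe that $U(i)$ contains $(i+1)$ at its top index while $V(i-1)$ and $W(i-1)$ each contribute $i$---and symmetrically $U(i-1)$ contributes $i$ while $V(i)$, $W(i)$ each contribute $(i+1)$---so $P(i) = i(i+1)Q(i)$ for a polynomial $Q$ of degree $2\D-4$. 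The problem thus reduces to showing $Q(i) \ge 0$ for $i \ge 1$.

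Second, I would dispatch the base case $i=1$ via Vandermonde's convolution $\binom{2\D}{2\d} = \sum_{j}\binom{\D}{j}\binom{\D}{2\d-j} \ge \binom{\D}{\d}^{2}$, and for the inductive step recast the claim as a super-additivity statement for $F_i(m) := \log\big[((i+1)m)!((i-1)m)!/((im)!)^{2}\big]$: since $F_i(0)=0$, super-additivity $F_i(\D)\ge F_i(\d)+F_i(\D-\d)$ would follow from convexity of $F_i$ in $m$, which in turn reduces to a trigamma inequality $(i+1)^{2}\psi'((i+1)m+1) + (i-1)^{2}\psi'((i-1)m+1) \ge 2i^{2}\psi'(im+1)$. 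The main obstacle is establishing this trigamma (or equivalently polynomial) inequality: a pointwise analysis of the $\psi'$-integrand yields a quadratic $(i+1)^{2}x^{2} - 2i^{2}x + (i-1)^{2}$ in $x = e^{-t}$ that is negative on a nontrivial interval around $x = i^{2}/(i+1)^{2}$, so integrand positivity fails and a careful grouping of terms---or a direct combinatorial/algebraic argument on $Q(i)$ itself, perhaps via a Lindström--Gessel--Viennot-style injection between non-crossing pair families for paths from $(0,0)$ and $(\d,\D-\d)$ to shifted endpoints---will be required.
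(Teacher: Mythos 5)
Your preparatory reductions are all correct: the specialization $P(i)=U(i)V(i-1)W(i-1)-U(i-1)V(i)W(i)$, the extraction of the factor $i(i+1)$, the Vandermonde base case $\binom{2\D}{2\d}\ge\binom{\D}{\d}^2$, and the reformulation of $\Delta(i)\ge 0$ as the superadditivity $F_i(\D)\ge F_i(\d)+F_i(\D-\d)$ for $F_i(m)=\log\bigl[((i+1)m)!\,((i-1)m)!/((im)!)^2\bigr]$, which would indeed follow from convexity of $F_i$ in $m$ since $F_i(0)=0$. But the argument stops exactly where the content of the proposition lies: the convexity of $F_i$, i.e.\ the trigamma inequality $(i+1)^{2}\psi'((i+1)m+1)+(i-1)^{2}\psi'((i-1)m+1)\ge 2i^{2}\psi'(im+1)$, is never proved. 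You yourself record that the natural pointwise attack fails, since with $x=e^{-mt}$ the integrand is governed by $(i+1)^{2}x^{2}-2i^{2}x+(i-1)^{2}$, whose discriminant $4(2i^{2}-1)$ is positive, so it is negative on an interval meeting $(0,1)$; and the promised alternatives (a ``careful grouping of terms'' or an unspecified Lindstr\"om--Gessel--Viennot-type injection attacking $Q(i)$ directly) are not carried out. So the inequality $Q(i)\ge 0$ for all $i\ge 1$ --- precisely what separates this proposition from the already-established asymptotic statement of Theorem~\ref{main-thm}(iii) --- remains an unproved claim; what you have is a correct reduction together with an honest list of obstacles, not a proof.

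For comparison, the paper closes this gap by an elementary interleaving argument rather than analysis: after using Lemma~\ref{lem-subsequence} to assume $\gcd(\D,\d)=1$, it reduces the sign of $\Delta(i)$ to that of $Q(i)=\prod_{j=1}^{\d}\bigl(1-\frac{1}{i+j/\d}\bigr)\prod_{j=1}^{\D-\d}\bigl(1-\frac{1}{i+j/(\D-\d)}\bigr)-\prod_{j=1}^{\D}\bigl(1-\frac{1}{i+j/\D}\bigr)$, and shows that the $\D$ numbers $j/\D$ alternate left of the $\D$ numbers $\{j/\d\}\cup\{j/(\D-\d)\}$: coprimality makes all the fractions distinct, and the mediant inequality (Fact~2) forces exactly one member of the union into each open interval $\left(\frac{k}{\D},\frac{k+1}{\D}\right)$; then monotonicity of $x\mapsto 1-\frac{1}{i+x}$ (Fact~1) yields $Q(i)\ge 0$ for every $i\ge 1$. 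Something of this kind --- or an actual proof of your trigamma inequality, which appears to be true but only with a very thin margin, so it is genuinely delicate --- is what your outline still needs before it constitutes a proof.
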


Before showing Proposition~\ref{ray-0}, we first demonstrate two
simple but useful facts.

Let $\alpha=(a_1,a_2,\ldots,a_n)$ and $\beta=(b_1,b_2,\ldots,b_n)$
be two $n$-tuples of real numbers. We say that $\alpha$ {\it
alternates left of} $\beta$, denoted by $\alpha\preceq\beta$, if
$$a_1^*\le b_1^*\le a_2^*\le b_2^*\cdots\le a_n^*\le b_n^*,$$
where $a_j^*$ and $b_j^*$ are the $j$th smallest elements of
$\alpha$ and $\beta$, respectively.
\begin{description}
  \item[Fact 1] Let $f(x)$ be a nondecreasing function.
    If $(a_1,a_2,\dots,a_n)\preceq(b_1,b_2,\ldots,b_n)$,
    then $\prod_{i=1}^{n}f(a_i) \le \prod_{i=1}^{n}f(b_i)$.
  \item[Fact 2] Let $x_1,x_2,y_1,y_2$ be four positive numbers and
    $\frac{x_1}{y_1}\le\frac{x_2}{y_2}$. Then
    $\frac{x_1}{y_1}\le\frac{x_1+x_2}{y_1+y_2}\le\frac{x_2}{y_2}$.
\end{description}
\begin{proof}[Proof of Proposition~\ref{ray-0}]
By Lemma~\ref{lem-subsequence}, we may assume, without loss of
generality, that $\D$ and $\d$ are coprime. We need to show that
$$\Delta(i):=\binom{(i+1)\D}{(i+1)\d}\binom{(i-1)\D}{(i-1)\d}-{\binom{i\D}{i\d}}^2\ge 0$$
for all $i\ge 1$. Write
\begin{eqnarray*}
\Delta(i)=\frac{(i\D)![(i-1)\D]!\D^{\D}
\d^{\d}(\D-\d)^{(\D-\d)}\prod_{j=1}^{\D}\left(i+\frac{j}{\D}\right)
   \prod_{j=1}^{\d}\left(i+\frac{j}{\d}\right)
   \prod_{j=1}^{\D-\d}\left(i+\frac{j}{\D-\d}\right)}
   {(i\d)![(i+1)\d]![i(\D-\d)]![(i+1)(\D-\d)]!}Q(i),
\end{eqnarray*}
where
$$Q(i)=\prod_{j=1}^{\d}\left(1-\frac{1}{i+\frac{j}{\d}}\right)
       \prod_{j=1}^{\D-\d}\left(1-\frac{1}{i+\frac{j}{\D-\d}}\right)
      -\prod_{j=1}^{\D}\left(1-\frac{1}{i+\frac{j}{\D}}\right).$$
Then we only need to show that $Q(i)\ge 0$ for $i\ge 1$. We do this
by showing
$$\left(\frac{1}{\D},\ldots,\frac{\D-1}{\D},\frac{\D}{\D}\right)
\preceq\left(\frac{1}{\d},\ldots,\frac{\d-1}{\d},\frac{\d}{\d},
\frac{1}{\D-\d},\ldots,\frac{\D-\d-1}{\D-\d},\frac{\D-\d}{\D-\d}\right),$$
or equivalently,
$$\left(\frac{1}{\D},\ldots,\frac{\D-1}{\D}\right)
\preceq\left(\frac{1}{\d},\ldots,\frac{\d-1}{\d},
\frac{1}{\D-\d},\ldots,\frac{\D-\d-1}{\D-\d},1\right).$$ Note that
$(\D,\d)=1$ implies all fractions
$\left\{\frac{j}{\D}\right\}_{j=1}^{\D-1}$,
$\left\{\frac{j}{\d}\right\}_{j=1}^{\d-1}$ and
$\left\{\frac{j}{\D-\d}\right\}_{j=1}^{\D-\d-1}$ are different.
Hence it suffices to show that every term of
$\left\{\frac{j}{\d}\right\}_{j=1}^{\d-1}\bigcup\left\{\frac{j}{\D-\d}\right\}_{j=1}^{\D-\d-1}$
is precisely in one of $\D-2$ open intervals
$\left(\frac{k}{\D},\frac{k+1}{\D}\right)$, where $k=1,\ldots,\D-2$.
Indeed, neither two terms of
$\left\{\frac{j}{\d}\right\}_{j=1}^{\d-1}$ nor two terms of
$\left\{\frac{j}{\D-\d}\right\}_{j=1}^{\D-\d-1}$ are in the same
interval since their difference is larger than $\frac{1}{\D}$. On
the other hand, if $\frac{j}{\d}$ and $\frac{j'}{\D-\d}$ are in a
certain interval $\left(\frac{k}{\D},\frac{k+1}{\D}\right)$, then so
is $\frac{j+j'}{\D}$ by Fact~2, which is impossible. Thus there
exists precisely one term of
$\left\{\frac{j}{\d}\right\}_{j=1}^{\d-1}\bigcup\left\{\frac{j}{\D-\d}\right\}_{j=1}^{\D-\d-1}$
in every open interval $\left(\frac{k}{\D},\frac{k+1}{\D}\right)$,
as desired. This completes our proof.
\end{proof}

For the second generalization of the log-convexity of the central
binomial coefficients, we consider sequences of binomial
coefficients located along a vertical ray with origin
$\binom{n_0}{0}$ in the Pascal triangle.
\begin{prop}\label{v-ray}
Let $n_0\ge 0$ and $V_i(n_0)=\binom{n_0+2i}{i}$. Then
$V_0(n_0),V_1(n_0),\ldots,V_{m}(n_0)$ is log-concave and
$V_{m-1}(n_0),V_{m}(n_0),V_{m+1}(n_0),\ldots$ is log-convex, where
$m=n_0^2-\lrc{\frac{n_0}{2}}$.
\end{prop}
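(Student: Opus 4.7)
The plan is to reduce the problem to the sign of a quadratic in $i$ and then to locate its positive root at the threshold $m$. Writing $a = n_0$ for brevity, the factorial form of $V_i$ yields
\begin{equation*}
\frac{V_{i-1}(a)\, V_{i+1}(a)}{V_i(a)^2} = \frac{i(a+i)(a+2i+1)(a+2i+2)}{(i+1)(a+i+1)(a+2i-1)(a+2i)}.
\end{equation*}
Since the denominator is positive for $i\ge 1$, the sign of $V_{i-1}V_{i+1}-V_i^2$ matches that of the numerator minus denominator, say $N(i) := i(a+i)(a+2i+1)(a+2i+2) - (i+1)(a+i+1)(a+2i-1)(a+2i)$. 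The first step is a direct expansion: both products are monic of degree $4$ in $i$ and share the same cubic coefficient, so the difference collapses to the quadratic
\begin{equation*}
q(i) \;=\; 2i^2 \;-\; 2(a^2 - a - 1)\, i \;-\; a(a^2 - 1).
\end{equation*}

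Next I analyse the roots of $q$, namely $i^{\pm} = \tfrac12\bigl[(a^2-a-1) \pm \sqrt{a^4 - a^2 + 1}\bigr]$. A short check shows $\sqrt{a^4-a^2+1}\ge a^2-a+1$ for $a\ge 1$ (squaring gives the identity $2a(a-1)^2\ge 0$), so $i^- \le -1$. Consequently $q(i) \le 0$ on $[0, i^+]$ and $q(i) \ge 0$ on $[i^+, \infty)$, and the proposition reduces to showing $m-1 < i^+ \le m$ for $m = a^2 - \lceil a/2\rceil$. Equivalently, it suffices to verify the two inequalities $q(m) \ge 0$ and $q(m-1) \le 0$.

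These evaluations I check by splitting on the parity of $a$. Substituting $a=2b$ with $m = 4b^2 - b$ into $q$ yields
\begin{equation*}
q(m) = 6b^2, \qquad q(m-1) = -2b^2,
\end{equation*}
and substituting $a=2b+1$ with $m = 4b^2 + 3b$ yields
\begin{equation*}
q(m) = 2b(b+1), \qquad q(m-1) = -6b(b+1),
\end{equation*}
so the required signs hold in both parities. The small cases $a \in \{0, 1\}$, where $m=0$, reduce directly to $q(i) = 2i(i+1) > 0$ for $i\ge 1$, recovering the known log-convexity of $\{\binom{2i}{i}\}$ and $\{\binom{2i+1}{i}\}$.

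The main obstacle here is not conceptual but mechanical: the algebraic collapse of $N(i)$ to a quadratic is an honest (if slightly tedious) expansion, and the fact that $q(m)$ and $q(m-1)$ come out to such clean monomials in $b$ is exactly what validates the precise value $m = n_0^2 - \lceil n_0/2\rceil$ appearing in the statement. Everything else — monotonicity of a quadratic past its positive root and the upper bound on $i^-$ — is routine.
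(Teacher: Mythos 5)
Your proof is correct, and it follows a genuinely different route from the paper's. The paper works with the ratio sequence $f(i)=V_{i+1}(n_0)/V_i(n_0)=\frac{(n_0+2i+1)(n_0+2i+2)}{(i+1)(n_0+i+1)}$, treats $i$ as a real variable, computes $f'(i)$, locates its unique positive critical point $r=\frac{(n_0-2)(n_0+1)}{2}+\frac{n_0\sqrt{n_0^2-1}}{2}$, and then must both evaluate $\lceil r\rceil$ (via estimating $\lceil n_0\sqrt{n_0^2-1}/2\rceil$) and compare $f(\lfloor r\rfloor)$ with $f(\lceil r\rceil)$ by a parity split, with $n_0\in\{0,1\}$ handled separately through the known log-convexity of the central binomial coefficients. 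You instead attack the discrete quantity $V_{i-1}V_{i+1}-V_i^2$ directly: its sign equals that of the quadratic $q(i)=2i^2-2(n_0^2-n_0-1)i-n_0(n_0^2-1)$ (I checked the collapse of the two quartics, the root bound $i^-\le-1$, and all four evaluations $q(m)=6b^2$, $q(m-1)=-2b^2$ for $n_0=2b$ and $q(m)=2b(b+1)$, $q(m-1)=-6b(b+1)$ for $n_0=2b+1$; they are all right), so the whole proposition reduces to two integer evaluations of a quadratic. Your argument is purely algebraic --- no calculus, no ceiling of an irrational quantity, no separate comparison of the ratio at the two integers flanking a real critical point --- and it absorbs $n_0\in\{0,1\}$ almost uniformly, whereas the paper's ratio/derivative approach yields extra information (the monotone behavior of $f$ and the exact real location $r$ of its minimum) at the cost of those additional steps. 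Two cosmetic points: the two degree-$4$ products have leading coefficient $4$, so they are not \emph{monic} (what you actually use, agreement of the $i^4$ and $i^3$ coefficients, is true); and it would be worth stating explicitly that $q(m-1)\le 0$ together with $i^-\le -1$ gives $q(i)\le 0$ for all integers $1\le i\le m-1$, while $q(m)\ge 0$ together with $m>i^-$ gives $i^+\le m$ and hence $q(i)\ge 0$ for all $i\ge m$, which is exactly the pair of inequalities the proposition requires.
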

\begin{proof}
The sequence $V_i(0)=\binom{2i}{i}$ is just the central binomial
coefficients and therefore log-convex for $i\ge 0$. It implies that
the sequence $V_i(1)=\binom{1+2i}{i}$ is log-convex for $i\ge 0$
since $V_i(1)=\frac{1}{2}V_{i+1}(0)$. Now let $n_0\ge 2$ and define
$f(i)=V_{i+1}(n_0)/V_i(n_0)$ for $i\ge 0$. Then, to show the
statement, it suffices to show that
\begin{equation}\label{f-v}
f(0)>f(1)>\cdots >f(m-1)\quad\text{and}\quad
f(m-1)<f(m)<f(m+1)<\cdots\end{equation} for
$m=n_0^2-\lrc{\frac{n_0}{2}}$.

By the definition we have
\begin{equation}\label{fi}
f(i)=\frac{\binom{n_0+2(i+1)}{i+1}}{\binom{n_0+2i}{i}}=\frac{(n_0+2i+1)(n_0+2i+2)}{(i+1)(n_0+i+1)}.
\end{equation} The derivative of $f(i)$ with respect to $i$ is
$$f'(i)=\frac{2i^2-2(n_0-2)(n_0+1)i-(n_0+1)(n_0^2-2)}{(i+1)^2(n_0+i+1)^2}.$$
The numerator of $f'(i)$ has the unique positive zero
\begin{eqnarray*}
r &=&\frac{2(n_0-2)(n_0+1)+\sqrt{4(n_0-2)^2(n_0+1)^2+8(n_0+1)(n_0^2-2)}}{4}\\
  &=& \frac{(n_0-2)(n_0+1)}{2}+\frac{n_0\sqrt{n_0^2-1}}{2}.
\end{eqnarray*}
It implies that $f'(i)<0$ for $0\le i<r$ and $f'(i)>0$ for $i>r$.
Thus we have
\begin{equation}\label{f-p}
f(0)>f(1)>\cdots>f(\lrf{r})\quad\text{and}\quad f(\lrc{r})<
f(\lrc{r}+1)<f(\lrc{r}+2)<\cdots.\end{equation} It remains to
compare the values of $f(\lrf{r})$ and $f(\lrc{r})$. Note that
$$\frac{n_0^2-n_0\sqrt{n_0^2-1}}{2}=\frac{n_0}{2(n_0+\sqrt{n_0^2-1})}<\frac{1}{2}.$$
Hence $$\lrc{\frac{n_0\sqrt{n_0^2-1}}{2}}=\left\{\begin{array}{ll}
 \frac{n_0^2}{2}, & \hbox{if $n_0$ is even;} \\
 \frac{n_0^2+1}{2}, & \hbox{if $n_0$ is odd,}
 \end{array}
 \right.$$
and so
$$\lrc{r}=\frac{(n_0-2)(n_0+1)}{2}+\lrc{\frac{n_0\sqrt{n_0^2-1}}{2}}
=\left\{
   \begin{array}{ll}
     n_0^2-\frac{n_0}{2}-1, & \hbox{if $n_0$ is even;} \\
     n_0^2-\frac{n_0+1}{2}, & \hbox{if $n_0$ is odd.}
   \end{array}
 \right.
$$

If $n_0$ is even, then by (\ref{fi}) we have
$$f(\lrc{r})=\frac{16n_0^2-8}{4n_0^2-1}=4-\frac{4}{4n_0^2-1}$$ and
$$f(\lrf{r})=f(\lrc{r}-1)=\frac{16n_0^4-40n_0^2+16}{4n_0^4-9n_0^2+4}=4-\frac{4(n_0^2-2)}{4n_0^4-9n_0^2+4}.$$
Thus $f(\lrf{r})>f(\lrc{r})$ since
$f(\lrf{r})-f(\lrc{r})=\frac{8}{(4n_0^2-1)(4n_0^4-9n_0^2+4)}>0$.
Also, $\lrc{r}=m-1$. Combining (\ref{f-p}) we obtain (\ref{f-v}).

If $n_0$ is odd, then
$$f(\lrc{r})=4-\frac{4(n_0^2+1)}{4n_0^4+3n_0^2+1}$$ and
$$f(\lrf{r})=4-\frac{4(n_0^2-1)}{4n_0^4-5n_0^2+1}.$$
It is easy to verify that $f(\lrf{r})<f(\lrc{r})$. Also,
$\lrf{r}=\lrc{r}-1=m-1$. Thus (\ref{f-v}) follows. This completes
our proof.
\end{proof}
\section{Concluding remarks and open problems}
\hspace*{\parindent}
In this paper we show that the sequence
$C_i=\binom{n_0+i\D}{k_0+i\d}$ is unimodal when $\D<\d$. A further
problem is to find out the value of $i$ for which $C_i$ is a
maximum. Tanny and Zuker~\cite{TZ74,TZ76,TZ78} considered such a
problem for the sequence $\binom{n_0-i\D}{i}$. For example, it is
shown that the sequence $\binom{n_0-i}{i}$ attains the maximum when
$i=\lrf{(5n_0+7-\sqrt{5n_0^2+10n_0+9})/10}$. Let $r(n_0,d)$ be the
least integer at which $\binom{n_0-i\D}{i}$ attains its maximum.
They investigated the asymptotic behavior of $r(n_0,d)$ for
$d\rightarrow\infty$ and concluded with a variety of unsolved
problems concerning the numbers $r(n_0,d)$. An interesting problem
is to consider analogue for the general binomial sequence
$C_i=\binom{n_0+i\D}{k_0+i\d}$ when $\D<\d$. It often occurs that
unimodality of a sequence is known, yet to determine the exact
number and location of modes is a much more difficult task.

A finite sequence of positive numbers $a_0,a_1,\ldots,a_n$ is called
a {\it P\'olya frequency sequence} if its generating function
$P(x)=\sum_{i=0}^{n}a_ix^i$ has only real zeros. By the Newton's
inequality, if $a_0,a_1,\ldots,a_n$ is a P\'olya frequency sequence,
then
$$a_i^2\ge a_{i-1}a_{i+1}\left(1+\frac{1}{i}\right)\left(1+\frac{1}{n-i}\right)$$
for $1\le i\le n-1$, and the sequence is therefore log-concave and
unimodal with at most two modes (see Hardy, Littlewood and
P\'olya~\cite[p. 104]{HLP52}). Darroch~\cite{Dar64} further showed
that each mode $m$ of the sequence $a_0,a_1,\ldots,a_n$ satisfies
$$\left\lfloor\frac{P'(1)}{P(1)}\right\rfloor\le m\le \left\lceil\frac{P'(1)}{P(1)}\right\rceil.$$
We refer the reader to
\cite{Bre89,Bre94,Erd52,LWaam07,MWejc08,Sta89,WYjcta05} for more
information.

For example, the binomial coefficients
$\binom{n}{0},\binom{n}{1},\ldots,\binom{n}{n}$ is a P\'olya
frequency sequence with the unique mode $n/2$ for even $n$ and two
modes $(n\pm 1)/2$ for odd $n$. On the other hand, the sequence
$\binom{n}{0},\binom{n-1}{1},\binom{n-2}{2},\ldots,\binom{\lrc{n/2}}{\lrf{n/2}}$
is a P\'olya frequency sequence since its generating function is
precisely the matching polynomial of a path on $n$ vertices. Hence
we make the more general conjecture that every sequence of binomial
coefficients located in a transversal of the Pascal triangle is a
P\'olya frequency sequence.
\begin{conj}
Let $C_i=\binom{n_0+i\D}{k_0+i\d}$ where $n_0\ge k_0$ and $\d>\D>
0$. Then the finite sequence $\{C_i\}_i$ is a P\'olya frequency
sequence.
\end{conj}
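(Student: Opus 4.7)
The plan is to attack this conjecture through interlacing of zeros, supported by an induction on the parameters. Define
\[
P_{n,k}(x):=\sum_{i\ge 0}\binom{n+i\D}{k+i\d}x^{i},
\]
which, since $\d>\D$, is a genuine polynomial of degree $\lrf{(n-k)/(\d-\D)}$. The task is to show $P_{n_0,k_0}(x)$ is real-rooted. A natural first step is to derive the Pascal-type recurrence
\[
P_{n,k}(x)=P_{n-1,k}(x)+P_{n-1,k-1}(x),
\]
obtained termwise from $\binom{n+i\D}{k+i\d}=\binom{n-1+i\D}{k+i\d}+\binom{n-1+i\D}{k-1+i\d}$. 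On its own this identity does not preserve real-rootedness, so it must be coupled with an additional structural claim.

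I would strengthen the induction hypothesis in the spirit of the classical proofs of real-rootedness for matching polynomials of graphs: namely, assert that for each admissible $(n,k)$ the two polynomials $P_{n-1,k}$ and $P_{n-1,k-1}$ have only real zeros which interlace in a prescribed sense. The standard interlacing lemma then forces the sum $P_{n-1,k}+P_{n-1,k-1}=P_{n,k}$ to be real-rooted, and the interlacing would propagate to the next level, keeping the induction alive. The base cases include $k_0=0$ and small $n_0$, together with the specialization $\D=1,\d=2$: here the symmetry $\binom{n+i}{k+2i}=\binom{n+i}{n-k-i}$ and the substitution $j=n-k-i$ reindex the sequence as $\binom{(2n-k)-j}{j}$, whose generating function is the matching polynomial of a path on $2n-k$ vertices, and hence real-rooted, as observed in the paper's closing remarks.

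If the interlacing fails to propagate cleanly, two natural alternatives present themselves. One is to invoke the Aissen--Schoenberg--Whitney characterization of P\'olya frequency sequences by total positivity of the associated Toeplitz matrix $(C_{j-i})_{i,j\ge 0}$, then attempt to express its minors as signed enumerations of non-intersecting lattice-path systems via Lindstr\"om--Gessel--Viennot; the combinatorial approach in Section~3 suggests a natural path model to try. The other is to search for a three-term recurrence of $\{P_{n,k}\}$ in one of its parameters whose coefficients fit the sign pattern of a Sturm chain, directly forcing real-rootedness of each member of the chain.

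The main obstacle is the interlacing step. Because $P_{n-1,k}$ and $P_{n-1,k-1}$ need not have degrees differing by exactly one (they differ only when $n-k$ changes residue class modulo $\d-\D$), the classical interlacing framework requires careful adjustment, and it is not a priori clear which precise strengthening of the hypothesis will survive the induction. A prudent preliminary step would be extensive numerical verification for small $n_0,k_0,\D,\d$, both to test the conjecture and to reveal regularities in the root patterns that could guide the correct formulation of the inductive claim.
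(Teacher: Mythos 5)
This statement is not a theorem of the paper: it is Conjecture~2, which the authors state in Section~5 as an open problem, supported only by two special cases (the rows of the Pascal triangle and the transversal $\binom{n}{0},\binom{n-1}{1},\ldots$, whose generating function is the matching polynomial of a path). So there is no proof in the paper to compare yours against, and, more to the point, your text does not close the gap either: it is a research plan whose decisive step is explicitly left unproved. The Pascal-type recurrence $P_{n,k}=P_{n-1,k}+P_{n-1,k-1}$ is correct (termwise, under the convention $\binom{n}{k}=0$ for $k<0$ or $k>n$), but a sum of two real-rooted polynomials is in general not real-rooted; what you would need is a common-interlacing (or compatibility) statement for the pair $\bigl(P_{n-1,k},P_{n-1,k-1}\bigr)$ that is itself reproduced at the next level of the induction. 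You never formulate this hypothesis precisely, and as you yourself note, the degrees $\lrf{(n-1-k)/(\d-\D)}$ and $\lrf{(n-k)/(\d-\D)}$ may coincide or differ by one depending on the residue of $n-k$ modulo $\d-\D$, so even the shape of the interlacing claim is unclear. Asserting the strengthened hypothesis ``in the spirit of matching polynomials'' is exactly where the whole difficulty of the conjecture sits; without it the induction is circular.

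The fallback routes you mention are likewise only named, not executed: the Aissen--Schoenberg--Whitney criterion requires nonnegativity of \emph{all} minors of the Toeplitz matrix $(C_{j-i})$, and the lattice-path model of Section~3 only yields the $2\times 2$ minors (i.e.\ log-concavity, which is Theorem~1(ii) and is a necessary but far from sufficient condition for the P\'olya frequency property); no Lindstr\"om--Gessel--Viennot configuration for the higher minors is exhibited. The one solid ingredient is your base case $\D=1$, $\d=2$: the reindexing $j=n-k-i$ indeed turns $\binom{n+i}{k+2i}$ into $\binom{2n-k-j}{j}$, and reversing the coefficient order preserves real-rootedness, so this recovers the path-matching-polynomial case already observed in the paper. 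As it stands, then, your proposal reproduces the known evidence and a plausible strategy, but the conjecture remains open; the concrete missing ingredient is a proof (or even a precise statement) of the interlacing property that would make the Pascal recurrence propagate real-rootedness.
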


In Proposition~\ref{v-ray} we have shown that the sequence
$V_i(n_0)=\binom{n_0+2i}{i}$ is first log-concave and then
log-convex. It is possible that an arbitrary sequence of binomial
coefficients located along a ray in the Pascal triangle has the same
property as the sequence $V_i(n_0)$. We leave this as a conjecture
to end this paper.
\begin{conj}
Let $C_i=\binom{n_0+i\D}{k_0+i\d}$ where $n_0\ge k_0$ and $\D>\d>0$.
Then there is a nonnegative integer $m$ such that
$C_0,C_1,\ldots,C_{m-1},C_m$ is log-concave and
$C_{m-1},C_m,C_{m+1},\ldots$ is log-convex.
\end{conj}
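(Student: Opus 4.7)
The natural approach is to pass to the consecutive ratios $r_i:=C_{i+1}/C_i$. Since log-concavity (resp.\ log-convexity) of a run of $C_i$'s is equivalent to $\{r_i\}$ being (weakly) decreasing (resp.\ increasing) on that run, the conjecture reduces to showing that $\{r_i\}_{i\ge 0}$ is first (weakly) decreasing and then (weakly) increasing. The explicit formula
\[ r_i=\frac{\prod_{j=1}^{\D}(n_0+i\D+j)}{\prod_{j=1}^{\d}(k_0+i\d+j)\prod_{j=1}^{\D-\d}(n_0-k_0+i(\D-\d)+j)} \]
extends to a smooth positive function $r(x)$ of a real variable $x\ge 0$. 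My plan is to study its logarithmic derivative
\[ F(x):=\frac{d}{dx}\log r(x)=\sum_{j=1}^{\D}\frac{1}{x+\alpha_j}-\sum_{j=1}^{\d}\frac{1}{x+\beta_j}-\sum_{j=1}^{\D-\d}\frac{1}{x+\gamma_j}, \]
where $\alpha_j=(n_0+j)/\D$, $\beta_j=(k_0+j)/\d$, and $\gamma_j=(n_0-k_0+j)/(\D-\d)$; since all these shifts are positive, $F$ is smooth on $[0,\infty)$. The whole conjecture reduces to the analytic claim that \emph{$F(x)$ has at most one zero on $[0,\infty)$ and changes sign there from negative to positive}. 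Granted this, the transition index is $m=\lrc{x^\ast}$ when the unique zero $x^\ast$ exists, and $m=0$ otherwise.

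Two ends of the interval are already under control. Rewriting the Vieta computation of Theorem~\ref{main-thm}(iii) in terms of $F$ yields the asymptotic $F(x)=\tfrac{1}{2x^2}+O(x^{-3})$, so $F>0$ for all sufficiently large $x$ and $F\to 0^+$ at infinity. When $F(0)\ge 0$, one can hope that $F$ stays nonnegative throughout $[0,\infty)$ and take $m=0$; the genuine case of the conjecture is $F(0)<0$, where a transition must actually occur.

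The main obstacle is ruling out multiple sign changes of $F$ on $(0,\infty)$. Writing $F(x)=N(x)/D(x)$ with $D(x)=\prod_{j}(x+\alpha_j)(x+\beta_j)(x+\gamma_j)>0$, the numerator $N(x)$ is a polynomial of degree $2\D-2$ with positive leading coefficient (this is essentially the $P(i)$ of Theorem~\ref{main-thm}(iii)); one needs to show exactly one of its real roots is positive. A promising route is the Laplace representation
\[ F(x)=\int_{0}^{\infty}e^{-xt}\Bigl(\sum_{j=1}^{\D}e^{-\alpha_j t}-\sum_{j=1}^{\d}e^{-\beta_j t}-\sum_{j=1}^{\D-\d}e^{-\gamma_j t}\Bigr)\,dt, \]
combined with a variation-diminishing theorem bounding the number of sign changes of $F$ by those of the exponential polynomial integrand. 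Another route is an interlacing analysis between $\{\alpha_j\}$ and $\{\beta_j\}\cup\{\gamma_j\}$ (both multisets of size $\D$) in the spirit of the proof of Proposition~\ref{ray-0}. The difficulty---and the reason the statement is left only as a conjecture---is that the clean alternation exploited there for $n_0=k_0=0$ is destroyed once the shifts $n_0,k_0$ are positive, so a more delicate combinatorial-analytic argument is needed to certify the single crossing in full generality.
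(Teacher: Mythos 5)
The statement you are addressing is left as an open conjecture in the paper; the authors prove it only in the special case $\D=2$, $\d=1$ (Proposition~\ref{v-ray}), and your plan is in fact a faithful generalization of that proof: pass to the ratios $f(i)=C_{i+1}/C_i$, extend to a real variable, and show the logarithmic derivative changes sign exactly once from negative to positive. Your reduction is correct -- the conjecture is equivalent to the ratio sequence being first weakly decreasing and then weakly increasing -- and your asymptotic $F(x)=\tfrac{1}{2x^{2}}+O(x^{-3})$ is right; it is the analytic counterpart of the computation $a_{2\D-1}=0$, $a_{2\D-2}=\tfrac12$ in the proof of Theorem~\ref{main-thm}(iii). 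But none of this is yet a proof, and you say so yourself: the entire content of the conjecture is concentrated in the single-crossing claim that $F$ has at most one zero on $[0,\infty)$, and that claim is not established. Writing $F=N/D$, the numerator $N$ has degree $2\D-2$, so a priori it can have up to $2\D-2$ positive roots; for $\D=2$ it is a quadratic and the paper's Proposition~\ref{v-ray} argument closes the case, but for $\D\ge 3$ nothing in your sketch bounds the number of positive roots.

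Concretely, both escape routes you name are incomplete. The Laplace route does give, via the variation-diminishing property of the transform, that the number of sign changes of $F$ on $(0,\infty)$ is at most that of the exponential sum $g(t)=\sum_j e^{-\alpha_j t}-\sum_j e^{-\beta_j t}-\sum_j e^{-\gamma_j t}$; but bounding the sign changes of $g$ by one is a problem of exactly the same difficulty (by the Descartes-type rule for exponential polynomials the bound is governed by how the multisets $\{\alpha_j\}$ and $\{\beta_j\}\cup\{\gamma_j\}$ interleave, and for general $n_0,k_0$ they can interleave many times). The interlacing route is the mechanism of Proposition~\ref{ray-0}, and, as you note, the clean alternation there depends on $n_0=k_0=0$ (and on $\gcd(\D,\d)=1$); positive shifts destroy it, and no substitute argument is supplied. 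There is also a small unresolved detail even granting the single crossing: the transition index is not simply $\lrc{x^{\ast}}$; as in the paper's proof of Proposition~\ref{v-ray}, one must compare $r_{\lrf{x^{\ast}}}$ with $r_{\lrc{x^{\ast}}}$ to locate the valley of the integer-restricted sequence. So what you have is a sound reduction and a plausible program -- essentially the program the authors themselves suggest by their treatment of the vertical-ray case -- but the decisive step remains open, which is precisely why the paper states this as a conjecture rather than a theorem.
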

\section*{Acknowledgments}

The authors thank Feng Guo, Po-Yi Huang and Yeong-Nan Yeh for
helpful discussions.


\end{document}